\documentclass[a4paper]{article}

\usepackage{amsthm}
\usepackage{amsmath}
\usepackage{amsfonts}
\usepackage{amssymb}
\usepackage{amsthm}

\usepackage{tikz-cd}
\usepackage{bookmark}
\usepackage{makeidx}
\usepackage{geometry}
\usepackage{indentfirst}
\usepackage{enumitem}
\usepackage{hyperref}
\usepackage{mathrsfs}
\usepackage{enumitem}
\usepackage{verbatim}

\newtheorem{thm}{Theorem}[subsection]
\newtheorem{defn}[thm]{Definition}
\newtheorem{prop}[thm]{Proposition}
\newtheorem{lem}[thm]{Lemma}
\newtheorem{cor}[thm]{Corollary}

\AfterEndEnvironment{thm}{\noindent\ignorespaces}
\AfterEndEnvironment{defn}{\noindent\ignorespaces}
\AfterEndEnvironment{prop}{\noindent\ignorespaces}
\AfterEndEnvironment{lem}{\noindent\ignorespaces}
\AfterEndEnvironment{cor}{\noindent\ignorespaces}

\newenvironment{subproof}[1][\proofname]{%
  \begin{proof}[#1]%
}{%
  \end{proof}%
}

\newcommand{\ian}{i_{an}}
\newcommand{\xan}{X_{an}}
\newcommand{\rsp}{\operatorname{rsp}}

\newcommand{\F}{\mathscr{F}}
\newcommand{\G}{\mathscr{G}}
\newcommand{\Sh}{\operatorname{Sh}}

\newcommand{\hypc}{\mathbb{H}}
\newcommand{\Hom}{Hom}
\newcommand{\id}{\operatorname{id}}
\newcommand{\supp}{\operatorname{supp}}
\newcommand{\open}{\operatorname{Open}}

\newcommand{\algs}{\operatorname{Sh}_{alg}}
\newcommand{\ans}{\operatorname{Sh}_{an}}
\newcommand{\Coh}{\operatorname{Coh}}
\newcommand{\QCoh}{\operatorname{QCoh}}

\newcommand{\oalg}{\mathscr{O}_{X}}
\newcommand{\oan}{\mathscr{O}_{X_{an}}}
\newcommand{\han}{\mathscr{H}_{X_{an}}}

\title{Relative-Hyper GAGA Theorem}
\author{Taewan Kim, Eita Haibara}

\begin{document}
\maketitle

\begin{abstract}
In this paper, we provide a relative hypercohomology version of Serre's GAGA theorem.
We prove that the relative hypercohomology of a complex of sheaves on a complex projective variety
is isomorphic to the relative hypercohomology of its analytification, with respect to an open or closed subvariety.
This result implies Serre's original GAGA theorem.
\end{abstract}
\tableofcontents

\section{Introduction}
Serre's GAGA paper \cite{gaga} establishes relationships between the algebraic structure of $X$ and the analytic structure on $X_{an}$.
One of the main results in the paper is the following theorem:
\begin{thm}[{\cite[Th\'eor\`eme 1]{gaga}}]
    Suppose that $X$ is a complex projective variety.
    For every coherent algebraic sheaf $\F$ on $X$, there is an isomorphism
    $$H^*(X, \F)\simeq H^*(X_{an}, \F_{an}).$$
\end{thm}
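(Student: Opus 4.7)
The plan is to follow Serre's original argument. First I would reduce to the case $X = \mathbb{P}^n$. Since $X$ is projective, fix a closed immersion $j: X \hookrightarrow \mathbb{P}^n$. The pushforwards $j_*$ and $(j_{an})_*$ are exact on coherent sheaves and preserve cohomology; moreover, analytification commutes with $j_*$ for closed immersions, i.e.\ $(j_*\F)_{an} \cong (j_{an})_* \F_{an}$. So it suffices to prove the isomorphism for every coherent algebraic sheaf on $\mathbb{P}^n$.

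For the base case I would take $\F = \oalg(d)$ on $\mathbb{P}^n$ and proceed by explicit \v{C}ech computation on the standard affine cover by the $(n+1)$ basic opens (used algebraically) and on their analytifications (which are Stein, hence acyclic for $\oan(d)$). Both computations yield the same graded pieces of the Koszul-type \v{C}ech complex, so the natural comparison map $H^i(\mathbb{P}^n, \oalg(d)) \to H^i(\mathbb{P}^n_{an}, \oan(d))$ is an isomorphism in every degree and every $d$.

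For a general coherent $\F$ on $\mathbb{P}^n$, I would run a descending induction on the cohomological degree $i$. For $i > n$ both $H^i(\mathbb{P}^n, \F)$ and $H^i(\mathbb{P}^n_{an}, \F_{an})$ vanish, the first by Grothendieck's vanishing and the second by the Cartan--Serre finiteness package on compact complex spaces. For the inductive step, large-twist global generation supplies an exact sequence $0 \to \G \to \oalg(-d)^{\oplus k} \to \F \to 0$ with $\G$ coherent and $d \gg 0$. Comparing the long exact cohomology sequences for this SES algebraically and analytically, and using the induction hypothesis at degree $i+1$ together with the base case for $\oalg(-d)$, the five-lemma first produces surjectivity of the comparison $\alpha^i(\F): H^i(\mathbb{P}^n, \F) \to H^i(\mathbb{P}^n_{an}, \F_{an})$ for every coherent $\F$. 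Applying that surjectivity to $\G$ and chasing the five-lemma once more then upgrades surjectivity to bijectivity, closing the induction.

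The main obstacle is not the five-lemma bookkeeping but the external analytic inputs that power it: the Cartan--Serre finiteness theorem, which guarantees finite-dimensionality and vanishing in high degrees and keeps the descending induction well-posed, and the explicit analytic cohomology of $\oan(d)$, for which one must know that the standard cover of $\mathbb{P}^n_{an}$ is acyclic (Cartan B on Stein opens) and that its \v{C}ech complex matches the algebraic one. Once these are in hand, the reduction of the first paragraph extends the result from $\mathbb{P}^n$ to an arbitrary complex projective $X$.
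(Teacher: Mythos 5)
The paper does not prove this statement at all: it is quoted verbatim from Serre as \cite[Th\'eor\`eme 1]{gaga} and used as a black box, so there is no internal proof to compare against. Your outline is a faithful reconstruction of Serre's own argument and is essentially correct: the reduction to $\mathbb{P}^n$ via a closed immersion, the base case of the twisting sheaves $\mathscr{O}(d)$, and the descending induction with the two-pass five-lemma (surjectivity for all coherent sheaves first, then injectivity by applying that surjectivity to the kernel $\G$) is exactly the structure of Serre's proof. The one place you diverge is the base case: Serre computes $H^*(\mathbb{P}^n_{an},\mathscr{O}(d))$ by induction on $n$ using the hyperplane-section exact sequence $0\to\mathscr{O}(d-1)\to\mathscr{O}(d)\to\mathscr{O}_H(d)\to 0$, whereas you propose a direct \v{C}ech computation on the standard Stein cover. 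That route also works, but be aware that the analytic \v{C}ech complex is not literally ``the same'' as the algebraic one --- its terms are rings of holomorphic functions on $(\mathbb{C}^*)^k\times\mathbb{C}^{n-k}$, which properly contain the Laurent polynomials --- so you must expand sections in Laurent series and check that the cohomology is computed multidegree by multidegree before concluding that the two answers agree; similarly, the vanishing of analytic cohomology above degree $n$ comes from Leray on the $(n+1)$-element Stein cover rather than from Cartan--Serre finiteness per se. With those points made precise, the argument is complete.
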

This theorem naturally extended to hypercohomology, as Theorem \ref{thm:hypergaga}.

We have a definition of relative hypercohomology (see Definition \ref{def:relative_hyper}), 
using the functors of derived categories $Ri_!$ and $i^!$ that appear in the Verdier duality \eqref{homverdier}.
Since relative hypercohomology involves derived functors, it is not so clear anymore that the above theorem appears in relative hypercohomology.
However, a subvariety is a locally closed space in Zariski topology, thus we can simplify the derived functor.
We provide a relative hypercohomology version of the theorem in open subvarieties and closed subvarieties with certain conditions.
(the case of an open subvariety is Theorem \ref{thm:open_case}, the case of a closed subvariety is Theorem \ref{thm:closed_case}). 

\section{Background}
\subsection{Verdier Duality}
Let us denote $\Sh(X)$ as the category of sheaves over $X$, and let $f:X\rightarrow Y$ be a continuous map between topological spaces.
It induces the direct image functor $f_*:\Sh(X)\rightarrow \Sh(Y)$, defined by $f(\F)(U)=\F(f^{-1}(U))$ for $\F\in\Sh(X)$ and $U\in \open(X)$.
On the other hand, we have an inverse image functor $f^*:\Sh(Y)\rightarrow \Sh(X)$. 
Let $E\rightarrow Y$ be the \'etal\'e space of the sheaf $\F$ on $Y$.
There exist pullbacks in categories of topological spaces. Thus, we can pullback this \'etal\'e space and get the diagram below:
\begin{center}
    \begin{tikzcd}
        X\times_Y E \arrow[d] \arrow[r] & E \arrow[d] \\
        X \arrow[r, "f"']               & Y          
    \end{tikzcd}
\end{center}
Then the bundle $X\times_Y B\rightarrow X$ defines a sheaf on $X$, denoted $f^*(\G)$.
Indeed, an inverse image functor can be calculated as $f^*(\G)(U)=\varinjlim_{V\supset f(U)}\G(V)$.

They are adjoint to each other, i.e. they satisfy the following natural isomorphism
$$\Hom_{\Sh(X)}(f^*\G,\F)\simeq\Hom_{\Sh(Y)}(\G, f_*\F).$$
And we have the direct image functor with compact support $f_!:\Sh(X)\rightarrow\Sh(Y)$, defined by 
$$f_!(\F)(U)=\{s\in\F(f^{-1}(U)) \mid f|_{\supp(s)}:\supp(s)\rightarrow U \text{ is proper map}\}.$$
We expect that there exists a functor $f^!$ which is right adjoint to $f_!$. 
However, in a general setting, such a functor does not always exist.
Under certain conditions, there exists a functor $f^!:D^+\Sh(Y)\rightarrow D^+\Sh(X)$ which is right adjoint to $Rf_!$ in derived category.
This means that there is an isomorphism
\begin{equation}\label{globalverdier}
    \Hom_{D^+\Sh(X)}(Rf_!\F^\bullet, \G^\bullet) \simeq \Hom_{D^+\Sh(Y)}(\F^\bullet, f^!\G^\bullet).
\end{equation}
More generally, there is an isomorphism of derived hom-sheaves
\begin{equation}\label{homverdier}
    R\mathcal{H}om(Rf_!\F^\bullet, \G^\bullet) \simeq Rf_*R\mathcal{H}om(\F^\bullet, f^!\G^\bullet).
\end{equation}
We can get the global version \eqref{globalverdier} by applying $h^0\Gamma(Y, -)$ to \eqref{homverdier}.

If $X$ and $Y$ are locally compact Hausdorff (and a few additional conditions), the functor $f^!$ exists.
However, we work on varieties that have the Zariski topology, and they are not Hausdorff.
Fortunately, our interest lies in subvarieties that are locally closed subspace of the entire variety, and for which the functor $f^!$ exists.

\subsection{Duality of locally closed subspace}
This subsection recalls \cite[II.6]{iver}.
Let $Y$ be a locally closed subset of $X$, and $i:Y\hookrightarrow X$ be the natural inclusion.
\begin{defn}
    For a sheaf $\G\in \Sh(Y)$, we have a functor $i_!:\Sh(Y)\rightarrow\Sh(X)$ defined by
    $$i_!(\G)(U) = \{s\in\G(Y\cap U) \mid \supp(s)\text{ is closed relative to } U\}$$
    where $U\in \open(X)$. 
\end{defn}

To construct a functor $i^!$, first we have to construct a functor $\rsp_i$.
For a sheaf $\F\in \Sh(X)$, there exists the functor $\rsp_i:\Sh(X)\rightarrow\Sh(X)$ by
$$\rsp_i(\F)(V) = \{s\in\F(V) \mid \supp(s)\subset Y\}.$$
We sometimes write $\rsp_i(\F)$ as $\F^Y$.
Then we can construct the functor $i^!$ using $\rsp_i$:
\begin{defn}
    We have a functor $i^!:\Sh(X)\rightarrow \Sh(Y)$ defined by
    $$i^!:=i^*\rsp_i.$$
\end{defn}
We can see that these functors are adjoint to each other.
\begin{thm}[{\cite[II.6.3, II.6.8]{iver}}]\label{thm:local_duality}
    The two functors
    \begin{center}
        \begin{tikzcd}
            \Sh(Y) \arrow[r, "i_!"', shift right] & \Sh(X) \arrow[l, "i^!"', shift right]
        \end{tikzcd}
    \end{center}
    satisfy the following identity
    $$\Hom_{\Sh(X)}(i_!\G,\F)\simeq\Hom_{\Sh(Y)}(\G, i^!\F)$$
\end{thm}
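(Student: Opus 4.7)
The plan is to construct a natural bijection explicitly by exploiting the fact that any morphism out of $i_!\G$ has image supported on $Y$, so it factors through the subsheaf $\rsp_i(\F)=\F^Y\subset\F$. This reduces the adjunction to the easier identification of sheaves on $X$ supported in $Y$ with sheaves on $Y$.

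First I would compute the stalks of $i_!\G$. Since $Y$ is locally closed, every $y\in Y$ has an open neighborhood $V\subset X$ with $Y\cap V$ closed in $V$; restricting to such $V$ makes the ``support closed in $U$'' condition in the definition of $i_!$ automatic, so $(i_!\G)_y\simeq\G_y$. For $x\notin Y$, either $x\notin\overline{Y}$ (small neighborhoods miss $Y$ entirely), or $x\in\overline{Y}\setminus Y$, in which case any $s\in(i_!\G)(V)$ near $x$ has $\supp(s)$ closed in $V$ and contained in $Y$, hence disjoint from $x$. In both cases $(i_!\G)_x=0$.

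It follows at once that every morphism $\phi\colon i_!\G\to\F$ satisfies $\supp(\phi_U(s))\subset\supp(s)\subset Y$, so $\phi$ factors uniquely through $\F^Y\hookrightarrow\F$, giving
$$\Hom_{\Sh(X)}(i_!\G,\F)\simeq\Hom_{\Sh(X)}(i_!\G,\F^Y).$$
Next I would produce a bijection $\Hom_{\Sh(X)}(i_!\G,\F^Y)\simeq\Hom_{\Sh(Y)}(\G,i^*\F^Y)=\Hom_{\Sh(Y)}(\G,i^!\F)$. Given $\phi\colon i_!\G\to\F^Y$, cover $Y$ by opens $W=Y\cap V$ with $Y\cap V$ closed in $V$; a section $g\in\G(W)$ then lifts to a section of $(i_!\G)(V)$, and $\phi(g)\in\F^Y(V)$ determines a germ along $W$, assembling into a map $\psi\colon\G\to i^*\F^Y$. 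Conversely, given $\psi$ and $s\in(i_!\G)(V)$, the local representatives of $\psi(s)\in(i^*\F^Y)(Y\cap V)$ by sections of $\F^Y$ on neighborhoods of points of $\supp(s)$, together with the zero section on the open complement $V\setminus\supp(s)$, glue to a well-defined $\phi(s)\in\F^Y(V)\subset\F(V)$. I would then verify that these two constructions are mutually inverse and natural in $\G$ and $\F$.

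The main technical obstacle is the gluing step in this last construction: one must choose compatible representatives of $\psi(s)$ on a cover of $Y\cap V$ and patch them with the zero section across $V\setminus\supp(s)$, using the closedness of $\supp(s)$ in $V$ in an essential way. This is precisely where the local closedness of $Y$ enters. Apart from this, the verification of naturality and that the two assignments compose to the identity on both sides is formal.
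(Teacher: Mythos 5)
Your proposal is correct: the stalk computation $(i_!\G)_x=0$ for $x\notin Y$ forces every morphism $i_!\G\to\F$ to factor through $\rsp_i\F\subset\F$, and the remaining bijection is exactly the identification of $\Sh(Y)$ with sheaves on $X$ whose stalks vanish off $Y$, which your gluing argument (using that $\supp(s)$ is closed in $V$) establishes. The paper offers no proof of its own here --- it simply cites \cite[II.6.3, II.6.8]{iver} --- and your argument is essentially the one carried out in that reference, so there is nothing further to compare.
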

Hence $i_!$ is the left adjoint and $i^!$ is the right adjoint. 

Suppose $\F$ is a sheaf on $X$ whose stalks vanish outside of $Y$. 
Then $i_!i^*\F\simeq\F$, and we get $i_! i^!\simeq \rsp_i$.
Thus we can describe the section of the functor $i_!i^!$ as 
$$i_! i^!(\G)(V) = \{s\in \G(V) \mid \supp(s)\subset Y \}$$
for a sheaf $\G\in \Sh(X)$.

\begin{prop}
    The functor $i_!$ is exact, and the functor $f^!$ is left exact.
\end{prop}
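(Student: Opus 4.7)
The plan is to deduce left exactness of $i^!$ from the adjunction of Theorem \ref{thm:local_duality}, and to establish exactness of $i_!$ via a stalkwise computation.

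For $i^!$, Theorem \ref{thm:local_duality} already exhibits $i^!$ as a right adjoint to $i_!$. Any right adjoint between abelian categories preserves every limit that exists, in particular kernels and finite products, so it is automatically left exact. This half of the proposition therefore requires no further work (and in fact one gets more: $i^!$ preserves arbitrary products as well).

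For $i_!$, the same adjunction shows that $i_!$ is a left adjoint, hence right exact; what remains is left exactness. The plan is to reduce this to a stalk check. Concretely, I would verify that
$$(i_!\G)_x \;\cong\; \begin{cases} \G_x & \text{if } x \in Y, \\ 0 & \text{if } x \notin Y, \end{cases}$$
after which exactness of $i_!$ follows because exactness of a sequence in $\Sh(X)$ is tested on stalks and the stalk functors on $Y$ are exact. To carry out the stalk computation I would write $Y = W \cap Z$ with $W$ open and $Z$ closed in $X$, and read off the definition: for $x \in Y$, any germ of $i_!\G$ is represented by a section on some $Y \cap V$ with $V \subset W$ a small neighborhood of $x$, and the support condition is automatic after such shrinking; for $x \notin Y$, I would shrink $V$ either to miss $Z$ (which kills every allowed section) or, in the remaining case $x \in Z \setminus W$, exploit that $\supp(s)$ must be closed in $V$ to force the germ at $x$ to vanish.

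The step I expect to require the most care is the vanishing of the stalk at a point $x \in \overline{Y} \setminus Y$, where the locally closed hypothesis on $Y$ must genuinely enter; the remainder of the argument is formal bookkeeping with adjunctions and stalks.
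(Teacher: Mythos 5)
Your proposal is correct: the paper itself offers no proof of this proposition (it is stated as a recollection, with the surrounding material deferred to Iversen), and your argument --- left exactness of $i^{!}$ from the adjunction of Theorem \ref{thm:local_duality}, plus exactness of $i_{!}$ via the stalk formula $(i_{!}\G)_x\cong\G_x$ for $x\in Y$ and $(i_{!}\G)_x=0$ otherwise, with the relative-closedness of $\supp(s)$ doing the work at points of $\overline{Y}\setminus Y$ --- is exactly the standard argument in that reference. The only cosmetic remark is that the ``$f^{!}$'' in the statement should be read as $i^{!}$ for the locally closed inclusion $i$, as you have done.
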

Hence, we can define the right derived functor $Rf^!$, and it is isomorphic to the functor $f^!$ in ordinary verdier duality \eqref{homverdier} (see \cite[V.7.19]{borel}).

There are exactness properties of certain functors in specific cases (see \cite[II.6.9 and II.6.12]{iver}):
\begin{prop}\label{prop:exactness}
    Suppose that $i:U\hookrightarrow X$ is an inclusion of an open subset and $j:Z\hookrightarrow X$ is an inclusion of a closed subset.
    \begin{itemize}
        \item $i^*=i^!$ and $j_*=j_!$.
        \item $i_!$, $j^*$, $i^*=i^!$ and $j_*=j_!$ are exact.
        \item $i_*$ and $j^!$ are left exact and preserving injectives.
    \end{itemize}
\end{prop}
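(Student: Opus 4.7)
The plan is to verify each clause by either unpacking the definitions of $i_!$ and $\rsp_i$ directly or invoking a formal adjunction argument. There is no deep idea involved; the work is in managing the support conditions carefully.

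First I would establish the two identifications. For the equality $j_* = j_!$ with $j: Z \hookrightarrow X$ closed, the definition gives $j_!(\G)(V) = \{s \in \G(Z \cap V) \mid \supp(s) \text{ closed in } V\}$. Because $Z \cap V$ is closed in $V$ (as $Z$ is closed in $X$), any subset of $Z \cap V$ closed in the subspace topology is already closed in $V$, so the support condition is vacuous and we recover $\G(Z \cap V) = j_*\G(V)$. For $i^* = i^!$ with $i: U \hookrightarrow X$ open, I use that $i^! = i^* \rsp_i$: for an open $V \subset U$, the requirement $\supp(s) \subset U$ on $\F(V)$ is automatic since $V \subset U$, hence $\rsp_i(\F)|_U = \F|_U$ and $i^* \rsp_i = i^*$.

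The exactness claims then reduce to stalk computations. A direct inspection yields $(i_!\G)_x = \G_x$ for $x \in U$ and $(i_!\G)_x = 0$ for $x \notin U$: the first case uses that the support of any section on an open $V \subset U$ containing $x$ is automatically closed in $V$; the second uses that if $\supp(s)$ is closed in $V$ and contained in $U$, then $V \setminus \supp(s)$ is a neighborhood of $x \notin U$ on which $s$ restricts to zero. Similarly, $(j_*\F)_x = \F_x$ for $x \in Z$ and $0$ otherwise, taking $V$ disjoint from $Z$ in the latter case. Combined with the evident stalkwise formula for the restriction $j^*$, these compute stalks of all four functors and give exactness via the general fact that exactness of sheaves is tested stalkwise.

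Finally, the left exactness and injective-preservation of $i_*$ and $j^!$ follow formally: $i_*$ is right adjoint to the exact $i^*$, and by Theorem \ref{thm:local_duality} the functor $j^!$ is right adjoint to $j_! = j_*$, which the previous paragraph established to be exact. A right adjoint to an exact functor is left exact and sends injective objects to injective objects. The main obstacle, mild as it is, is the stalk computation for $(i_!\G)_x$ at points $x \notin U$, where the support condition does real work; the remaining steps are essentially bookkeeping.
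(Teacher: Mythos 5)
Your argument is correct and complete. The paper itself gives no proof of this proposition --- it simply cites Iversen \cite[II.6.9, II.6.12]{iver} --- so there is no in-text argument to compare against; what you have written is the standard verification one would find in that reference. The key points all check out: the support of a section is automatically closed in its domain, which makes the support condition in $j_!$ vacuous when $Z$ is closed and makes $\rsp_i$ act as the identity on sections over opens contained in $U$; the stalk computations $(i_!\G)_x = \G_x$ or $0$ and $(j_*\F)_x = \F_x$ or $0$ (the latter using that $X - Z$ is open) give exactness of the four functors in the second bullet; and the third bullet is the formal fact that a right adjoint of an exact functor is left exact and preserves injectives, applied to the adjunctions $(i^*, i_*)$ and $(j_! , j^!)$ from Theorem \ref{thm:local_duality}. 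The one step where care is genuinely needed --- the vanishing of $(i_!\G)_x$ for $x \notin U$, where the relative-closedness of the support is what lets you shrink $V$ to $V \setminus \supp(s)$ --- is handled correctly.
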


And there is a locally closed space version of the base change theorem.
\begin{thm}\label{thm:basechange}
    Let us consider a continuous map $f:X\rightarrow Y$, and a locally closed subset $B\subset Y$. Note that a subspace $A=f^{-1}(B)\subset X$ is also locally closed.
    There is a commutative diagram
    \begin{center}
        \begin{tikzcd}
            A \arrow[r, "f'"] \arrow[d, "i"', hook] & B \arrow[d, "j", hook] \\
            X \arrow[r, "f"']                       & Y                     
        \end{tikzcd}
    \end{center}
    and the following isomorphisms hold:
    $$f^* j_! \F \simeq i_! f'^* \F$$
    for $\forall \F\in \Sh(B)$, and 
    $$j^! f_* \G \simeq f'_* i^! \G$$
    for $\forall \G\in \Sh(X)$.
\end{thm}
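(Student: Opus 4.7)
The plan is to establish the first isomorphism $f^* j_! \F \simeq i_! (f')^* \F$ directly and deduce the second from it by a Yoneda argument. Chaining the adjunctions $(f^*, f_*)$ with $(j_!, j^!)$ gives $\Hom(f^* j_! \F, \G) \simeq \Hom(\F, j^! f_* \G)$, while chaining $((f')^*, f'_*)$ with $(i_!, i^!)$ gives $\Hom(i_! (f')^* \F, \G) \simeq \Hom(\F, f'_* i^! \G)$. Once the two left-hand functors are identified naturally in $\F$, uniqueness of right adjoints forces $j^! f_* \simeq f'_* i^!$.

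To establish the first isomorphism, I would reduce to simpler cases by factoring $j$. Write $j = j_o \circ j_c$, with $j_c : B \hookrightarrow U$ the closed inclusion into an open set $U \subset Y$ (for instance $U = Y \setminus (\overline{B} \setminus B)$) and $j_o : U \hookrightarrow Y$ the open inclusion. Setting $A' := f^{-1}(U)$ gives a parallel factorization $i = i_o \circ i_c$ with $i_o : A' \hookrightarrow X$ open and $i_c : A \hookrightarrow A'$ closed. Uniqueness of left adjoints applied to $(gh)^! \simeq h^! g^!$, or a direct section-level check, yields $j_! = (j_o)_! (j_c)_*$ and $i_! = (i_o)_! (i_c)_*$, using the identifications $(j_c)_! = (j_c)_*$ and $(i_c)_! = (i_c)_*$ from Proposition \ref{prop:exactness}. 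Hence it suffices to prove base change separately along a pure open inclusion and along a pure closed inclusion, and compose.

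In each pure case, I would construct the canonical base-change natural transformation from the adjunction unit of $(f^*, f_*)$ together with the commutation $f \circ i = j \circ f'$, and verify it is an isomorphism on stalks. For the open case, $(j_o)_!$ is extension by zero: its stalk at $y \in Y$ is the original stalk if $y \in U$ and $0$ otherwise, so the pullback by $f$ matches $(i_o)_!$ applied to the pullback on $A'$ stalk by stalk. For the closed case, because $B$ is closed in $U$, the stalk of $(j_c)_* \F$ is $\F_y$ for $y \in B$ and $0$ for $y \in U \setminus B$, and again pulling back and comparing gives a stalk-wise match. The main subtlety is verifying that the canonical adjunction-derived map really realizes these stalk identifications, rather than some ad hoc pointwise coincidence; once this is done, composing the two partial base changes gives the first isomorphism and the second follows formally.
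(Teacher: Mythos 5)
Your argument is correct. The paper itself does not prove this theorem---it recalls it from Iversen [II.6.9, II.6.12]---so the comparison is with the standard proof, which the paper's surrounding machinery implicitly supports. Your deduction of $j^! f_* \simeq f'_* i^!$ from the first isomorphism by chaining the adjunctions $(f^*,f_*)$, $(j_!,j^!)$ against $((f')^*,f'_*)$, $(i_!,i^!)$ and invoking uniqueness of right adjoints is exactly the standard argument and is the right way to handle the second formula. For the first isomorphism, however, the factorization $j=j_o\circ j_c$ into an open and a closed inclusion (your $U=Y\setminus(\overline{B}\setminus B)$ is the correct open set), with two separate base-change statements glued together, is more work than necessary, and it is precisely where your self-flagged subtlety lives: you must produce the canonical comparison map in each pure case and check that it induces the stalk identifications. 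The shorter route, which Proposition \ref{prop:unique_restriction} in the paper is tailored for, avoids both the case split and the map-construction issue: $i_!\G$ is characterized as the unique sheaf on $X$ restricting to $\G$ on $A$ and to $0$ on $X-A$, so it suffices to note that $i^*f^*j_!\F=(f')^*j^*j_!\F\simeq(f')^*\F$ (from $f\circ i=j\circ f'$ and the canonical identification $j^*j_!\F\simeq\F$) and that the stalks of $f^*j_!\F$ vanish on $X-A=f^{-1}(Y-B)$ because those of $j_!\F$ vanish on $Y-B$; uniqueness then yields $f^*j_!\F\simeq i_!(f')^*\F$ canonically and naturally in $\F$. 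Each of your two pure cases would in any event be proved by this same uniqueness-plus-stalks argument (and your composition law $(j_o j_c)_!=(j_o)_!(j_c)_!$ needs its own small section-level check), so the decomposition buys nothing here; still, nothing in your outline is wrong, and filling in the flagged verifications would give a complete proof.
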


\subsection{Relative hypercohomology}
This section is according to \cite[5.5]{maxim}.
\begin{defn}
    Let $\F^\bullet$ be a complex of sheaves of $X$. 
    The hypercohomology of $\F^\bullet$ is a cohomology of the right derived functor of the global section functor:
    $$\hypc^*(X; \F^\bullet):=R^*\Gamma(X, \F^\bullet)=h^*R\Gamma(X, \F^\bullet)$$
\end{defn}

Relative hypercohomology is a hypercohomology that uses functors in Verdier duality:
\begin{defn}\label{def:relative_hyper}
    For an inclusion of topological spaces $f:Y\hookrightarrow X$, relative hypercohomology is defined by:
    $$\hypc^*(X, X-Y; \F^\bullet):=\hypc^*(X;Rf_!f^!\F^\bullet)$$
\end{defn}
If we use a local system, the classical Alexander duality comes from the definition of relative hypercohomology by using Verdier duality.

\subsection{Serre's GAGA}
In this paper, we use the definition of algebraic variety in \cite[n\textsuperscript{o}~34]{fac}. 
Note that every (quasi-)affine and (quasi-)projective variety is an algebraic variety.
To see the construction in this section more precisely, see \cite[\S 5]{gaga}.

Let $X$ be a complex algebraic variety. 
Since a locally closed subset of affine space can inherit the usual topology of an affine space $\mathbb{C}^n$, 
each chart of $X$ can inherit an induced topology of $\mathbb{C}^n$.
We denote $X_{an}$ as a topological space equipped with its underlying set $X$ and the induced topology.
Any open subset of Zariski topology of the affine space is also open in the usual topology. 
Thus, the usual topology is finer than the Zariski topology and there exists a natural continuous map $h:X_{an}\rightarrow X$.
Let us denote a structure sheaf of $X$ as $\oalg$. 
The pullback $h^*\oalg$ is a subsheaf of $\han$, which is a sheaf of germs of holomorphic functions on $X_{an}$.
We denote $h^*\oalg$ as $\oan$.

An algebraic sheaf (resp. analytic sheaf) is defined as a sheaf of $\oalg$-module on $X$ (resp. $\han$-module on $X_{an}$).
Let us denote the category of algebraic sheaves of $X$ as $\algs(X)$ and the category of analytic sheaves of $X_{an}$ as $\ans(X_{an})$.
We can define an analytification functor of sheaves $(-)_{an}:\algs(X)\rightarrow\ans(X_{an})$.

\begin{defn}\label{def:analytification}
    Suppose $\F$ is an algebraic sheaf of $X$.
    The continuous map $h:X_{an}\rightarrow X$ induces the functor $h^*:\algs(X)\rightarrow\algs(X_{an})$.
    Then we can construct the analytic sheaf $\F_{an} := h^*(\F)\otimes_{\oan}\han\in\ans(X_{an})$, and we call it an analytification of $\F$.
\end{defn}

\begin{thm}[{\cite[Proposition 10]{gaga}}]
    \leavevmode
    \begin{itemize}
        \item The functor $(-)_{an}$ is exact.
        \item If $\F$ is a coherent algebraic sheaf, then $\F_{an}$ is a coherent analytic sheaf.
    \end{itemize}
\end{thm}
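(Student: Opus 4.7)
The plan is to decompose the analytification functor as $(-)_{an} = (-\otimes_{\oan}\han)\circ h^*$ and analyze each piece separately. The inverse image $h^*$ is exact and preserves finite presentations in full generality, so all the nontrivial content of the statement is concentrated in the tensor product with $\han$ over $\oan$.

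For exactness, I would first recall that $h^*$ is exact because it is computed on stalks: $(h^*\F)_x=\F_{h(x)}$. This reduces the exactness of $(-)_{an}$ to showing that $-\otimes_{\oan}\han$ is exact, i.e.\ that $\han$ is a flat $\oan$-module. Flatness is a local question on $X_{an}$, so I would check it at every point $x\in X_{an}$: the claim becomes that the ring $\han_x$ of convergent power series is flat over the algebraic local ring $\oan_x = (h^*\oalg)_x = \oalg_{h(x)}$. The key input is the classical fact that these two Noetherian local rings share the same formal completion, so that $\han_x$ is a faithfully flat extension of $\oan_x$, and flatness descends from the common completion.

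For coherence, I would argue locally. A coherent algebraic sheaf $\F$ admits, on a suitable Zariski neighborhood, a finite presentation
$$\oalg^{\oplus p}\to\oalg^{\oplus q}\to\F\to 0.$$
Applying the right exact functor $h^*$ yields a finite presentation
$$\oan^{\oplus p}\to\oan^{\oplus q}\to h^*\F\to 0$$
over $\oan$, and then applying the right exact functor $-\otimes_{\oan}\han$ gives a finite presentation
$$\han^{\oplus p}\to\han^{\oplus q}\to\F_{an}\to 0.$$
Invoking Oka's coherence theorem, $\han$ is a coherent sheaf of rings on $X_{an}$, so any finitely presented $\han$-module is automatically coherent, which finishes this part.

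The main obstacle is the flatness of $\han$ over $\oan$: this is a genuinely nontrivial algebraic fact rather than a formal consequence of the setup. Everything else, namely exactness of $h^*$, preservation of finite presentations by the two right exact functors, and the passage from finite presentation to coherence via Oka's theorem, is either formal or a standard citation from analytic sheaf theory. The flatness comparison, by contrast, requires either a direct analysis of convergent versus formal power series at each point or an appeal to a deeper approximation-type result, and it is the single step whose proof genuinely uses the analytic nature of $\han$.
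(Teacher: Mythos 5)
Your argument is correct and is essentially Serre's own proof of Proposition 10, which this paper does not reprove but simply cites: the decomposition into $h^*$ followed by $-\otimes_{\oan}\han$, the reduction of exactness to flatness of $\han_x$ over $\oan_x=\oalg_{h(x)}$ via the identification of their completions, and the coherence argument via local finite presentation plus Oka's theorem all match the source (the flatness fact is exactly the \cite[Corollaire 1, Proposition 22]{gaga} statement this paper later invokes). No gaps to report.
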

The above result implies that the functor $(-)_{an}:\algs(X)\rightarrow\ans(X)$ can be restricted to the functor $(-)_{an}:\Coh_{alg}(X)\rightarrow\Coh_{an}(X)$ naturally.
\begin{thm}[{\cite[Th\'eor\`eme 2,3]{gaga}}]
    The functor $(-)_{an}$ is an equivalence of categories, i.e., fully faithful and essentially surjective. 
    More precisely, it satisfies the following statements:
    \begin{itemize}
        \item For every coherent algebraic sheaves $\F$ and $\G$ on $X$, the mapping of the hom-sets induced by analytification $\Hom_{\algs(X)}(\F, \G)\rightarrow\Hom_{\ans(X_{an})}(\F_{an}, \G_{an})$ is bijective.
        \item For every coherent analytic sheaf $\G$ on $X_{an}$, there exists a coherent algebraic sheaf $\F$ on $X$ such that $\F_{an}\simeq\G$.
    \end{itemize}
\end{thm}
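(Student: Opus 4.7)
My plan is to reduce both assertions to the cohomology comparison theorem, which asserts that for every coherent algebraic sheaf $\F$ on a complex projective variety $X$, the natural map $H^*(X,\F)\to H^*(\xan,\F_{an})$ is an isomorphism. I would first establish this for $X=\mathbb{P}^n$ by direct calculation: on both the algebraic and analytic sides, the cohomology of the twisting sheaves $\mathscr{O}(d)$ can be computed explicitly and the answers coincide. Then for an arbitrary closed projective $X\hookrightarrow\mathbb{P}^n$, any coherent sheaf on $X$ pushes forward to a coherent sheaf on $\mathbb{P}^n$ whose analytification agrees, via the closed-inclusion base change, with the pushforward of the analytification; so the comparison on $\mathbb{P}^n$ transports to $X$. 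A descending induction on the degree of twist by $\mathscr{O}(d)$, combined with the Cartan--Serre finiteness theorem on the analytic side and the five lemma applied to the long exact sequences obtained from a surjection $\mathscr{O}(-d)^{\oplus N}\twoheadrightarrow\F$, would then complete the comparison in all degrees.

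For the full faithfulness, I would use that $\Hom_{\algs(X)}(\F,\G)=H^0(X,\mathcal{H}om_{\oalg}(\F,\G))$, and similarly $\Hom_{\ans(\xan)}(\F_{an},\G_{an})=H^0(\xan,\mathcal{H}om_{\han}(\F_{an},\G_{an}))$. Since $\mathcal{H}om_{\oalg}(\F,\G)$ is coherent when $\F$ and $\G$ are, and analytification commutes with $\mathcal{H}om$ on coherent sheaves, applying the degree-zero case of the cohomology comparison to $\mathcal{H}om_{\oalg}(\F,\G)$ delivers the desired bijection on Hom-sets.

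For essential surjectivity, given a coherent analytic sheaf $\mathscr{H}$ on $\xan$, my aim is to produce a coherent algebraic $\F$ with $\F_{an}\simeq\mathscr{H}$. Reducing again to $X=\mathbb{P}^n$ by extending by zero along a closed embedding, I would invoke the analytic analogue of Serre's generation theorem: for $d\gg 0$, the twist $\mathscr{H}(d)$ is generated by finitely many global sections. This produces a short exact sequence $0\to\mathscr{K}\to\mathscr{O}(-d)^{\oplus N}_{an}\to\mathscr{H}\to 0$ of coherent analytic sheaves; iterating on $\mathscr{K}$ gives a two-term resolution by analytifications of algebraic sheaves. By the full faithfulness just proved, the connecting analytic morphism is the analytification of a unique algebraic morphism, and its algebraic cokernel $\F$ satisfies $\F_{an}\simeq\mathscr{H}$ by right exactness of $(-)_{an}$.

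The main obstacle is the analytic generation step underlying essential surjectivity: showing that any coherent analytic sheaf on $\mathbb{P}^n_{an}$ has all sufficiently positive twists generated by finitely many global sections. The standard route couples this generation with the analytic Serre vanishing $H^i(\xan,\mathscr{H}(d))=0$ for $i>0$ and $d\gg 0$, and proceeds by induction on dimension using a generic hyperplane section. The subtle point is that the analytic vanishing is not elementary and, in Serre's original treatment, is intertwined with the cohomology comparison for coherent algebraic sheaves; the order of deductions must therefore be arranged carefully so that generation, vanishing, and comparison are bootstrapped together rather than circularly.
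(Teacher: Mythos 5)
The paper offers no proof of this statement---it is quoted directly as Th\'eor\`emes 2 and 3 of Serre's GAGA---and your sketch is a faithful outline of Serre's original argument: the cohomology comparison on $\mathbb{P}^n$ first, full faithfulness via taking $H^0$ of the coherent sheaf $\mathcal{H}om_{\oalg}(\F,\G)$ (using that analytification commutes with $\mathcal{H}om$ of coherent sheaves), and essential surjectivity via generation of high twists and a two-term presentation by analytified sums of line bundles whose differential descends by full faithfulness. The only imprecision is that the descending induction in the comparison theorem runs on the cohomological degree $q$ (anchored by vanishing above twice the dimension), jointly with induction on $n$ via hyperplane sections, rather than on the twisting degree $d$; otherwise the plan, including your identification of the analytic generation/vanishing step as the genuinely delicate point, matches the cited source.
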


Moreover, there is a relation between cohomologies of algebraic and analytic spaces.
\begin{thm}[{\cite[Th\'eor\`eme 1]{gaga}}]\label{thm:gaga}
    Suppose that $X$ is a complex projective variety (i.e. Zariski closed subvariety of a complex projective space $\mathbb{P}_r(\mathbb{C})$). 
    For every $\F\in\Coh_{alg}(X)$, there is an isomorphism
    $$H^*(X, \F)\simeq H^*(X_{an}, \F_{an}).$$
\end{thm}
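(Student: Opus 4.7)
The plan is to follow the classical Serre strategy: reduce to projective space, verify the claim on twisting sheaves by direct computation, and then bootstrap to all coherent sheaves via resolutions and a five-lemma induction.

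First, I would reduce to the case $X=\mathbb{P}^r(\mathbb{C})$. If $i\colon X\hookrightarrow \mathbb{P}^r$ is the closed embedding, both $i_*$ (algebraic) and $i_{an,*}$ (analytic) are exact on sheaves supported on $X$ and preserve cohomology. The compatibility $(i_*\F)_{an}\simeq i_{an,*}(\F_{an})$ together with $H^*(X,\F)=H^*(\mathbb{P}^r,i_*\F)$ (and similarly analytically) lets us assume from now on that $X=\mathbb{P}^r$ and $\F$ is an arbitrary coherent algebraic sheaf. Next I would construct the natural comparison map: the morphism $h\colon \mathbb{P}^r_{an}\to\mathbb{P}^r$ together with the adjunction $\F\to h_*h^*\F$ and the map to $h_*\F_{an}$ gives, after applying $R\Gamma$, functorial homomorphisms $\alpha^q(\F)\colon H^q(\mathbb{P}^r,\F)\to H^q(\mathbb{P}^r_{an},\F_{an})$ compatible with long exact sequences.

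The second stage is the direct verification for the line bundles $\mathcal{O}(n)$ on $\mathbb{P}^r$. Using the standard affine cover by the $U_i=\{x_i\neq 0\}$, which is simultaneously a Čech-acyclic cover in the Zariski and in the analytic topology (Cartan's Theorem B on Stein manifolds), one computes both cohomologies from the same Čech complex of Laurent monomials, and checks that the comparison map is an isomorphism degree by degree. With this in hand, one obtains the two crucial vanishing inputs: (i) for every coherent $\F$ and $n$ large, $H^q(\mathbb{P}^r,\F(n))=0$ for $q>0$ (Serre vanishing), and (ii) the analytic analogue $H^q(\mathbb{P}^r_{an},\F_{an}(n))=0$ for $q>0$ and $n$ large, together with finite-dimensionality of each $H^q(\mathbb{P}^r_{an},\F_{an})$ (Cartan--Serre finiteness on the compact complex manifold $\mathbb{P}^r_{an}$, combined with the fact that $\F_{an}(n)$ is generated by global sections for $n\gg 0$).

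For the induction, I would work by descending induction on $q$, starting from the cohomological-dimension bound $q>r$ where both sides vanish trivially. Given the statement in degree $q+1$ for every coherent $\F$, resolve $\F$ by a surjection $\mathcal{O}(-n)^{\oplus m}\twoheadrightarrow\F$ with $n\gg 0$ and kernel $\mathcal{R}$ also coherent. Apply $\alpha^\bullet$ to the resulting long exact sequence in algebraic cohomology and map it via the five-lemma to the analytic long exact sequence. The isomorphism in degree $q+1$ for $\mathcal{R}$, the direct check for $\mathcal{O}(-n)$, and the surjectivity of $\alpha^q(\mathcal{O}(-n)^m)$ (or vanishing when $q>0$ and $n$ is large) combine to show $\alpha^q(\F)$ is an isomorphism. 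Iterating down yields the claim in every degree.

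The main obstacle is the analytic vanishing (ii): without Serre vanishing for free on the analytic side, one needs Cartan--Serre finiteness of $H^*(\mathbb{P}^r_{an},\F_{an})$ and the fact that global sections of $\F_{an}(n)$ generate its stalks for $n\gg 0$, which is where the projectivity hypothesis truly enters. Everything else is formal manipulation of long exact sequences once the base case $\F=\mathcal{O}(n)$ and the two vanishings are established.
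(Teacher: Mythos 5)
The paper does not actually prove this statement; it is imported as a black box from \cite[Th\'eor\`eme 1]{gaga}, so there is no internal argument to compare against. Your proposal is, in outline, a faithful reconstruction of Serre's own proof: reduction to $\mathbb{P}^r$ via the closed embedding and the compatibility $(i_*\F)_{an}\simeq i_{an,*}(\F_{an})$, a functorial comparison map $\alpha^q$ induced by $h$, a base case for the twisting sheaves, and a descending induction on $q$ using the five lemma on $0\to\mathcal{R}\to\mathcal{O}(-n)^{\oplus m}\to\F\to 0$. The overall strategy is sound and is exactly the argument the citation points to.

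Two places in your sketch conceal more than they deliver. First, the base case: the \v{C}ech complex of $\mathcal{O}_{an}(n)$ for the cover $\{U_i\}$ is \emph{not} ``the same \v{C}ech complex of Laurent monomials'' as the algebraic one --- its terms are holomorphic functions on the Stein intersections, i.e.\ convergent Laurent series, and proving that the inclusion of the subcomplex of finite Laurent polynomials is a quasi-isomorphism is precisely where the analytic work lives. (Serre avoids this by computing $H^q(\mathbb{P}^r_{an},\mathcal{O}_{an})$ via Hodge--Dolbeault theory and then inducting on $r$ with the hyperplane sequence $0\to\mathcal{O}(n-1)\to\mathcal{O}(n)\to\mathcal{O}_H(n)\to 0$.) As stated, this step does not go through without an additional convergence argument. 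Second, and in the other direction, the ``main obstacle'' you single out --- analytic Serre vanishing and Cartan--Serre finiteness --- is not needed for this theorem at all: the descending induction starts at $q>2r$ (or $q>r$), where both sides vanish for dimension reasons, and the diagram chase uses only the base case for all $n$ and $q$ together with the inductive hypothesis in degree $q+1$; finiteness and analytic vanishing only become essential for Th\'eor\`emes 2 and 3 of \cite{gaga}. A minor point: the five-lemma step should be run in two passes (surjectivity of $\alpha^q$ for \emph{all} coherent sheaves first, then injectivity), since you need $\alpha^q(\mathcal{R})$ surjective before you can conclude that $\alpha^q(\F)$ is injective.
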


\section{Main result}
Let $X$ be a complex projective variety, i.e., a closed subvariety of complex projective space, and let $Y$ be its subvariety. 
By the definition of subvariety, $Y$ is a locally closed subset of $X$ over its Zariski topology.
We have a natural embedding $i:Y\hookrightarrow X$, which is an injective continuous map.

First we provide a hypercohomology version of Theorem \ref{thm:gaga} as follows.
The proof is analogous to \cite[Theorem 2.6.1]{algdr}.
\begin{thm}[Hyper-GAGA]\label{thm:hypergaga}
    Suppose that $\F^\bullet$ is a complex of algebraic coherent sheaves on $X$, and $X_{an}$ (resp. $\F_{an}^\bullet$) is an analytification of $X$ (resp. $\F^\bullet$).
    Then the hypercohomology of $\F^\bullet$ on $X$ is isomorphic to the hypercohomology of $\F_{an}^\bullet$ on $X_{an}$:
    $$\hypc^*(X, \F^\bullet)\simeq\hypc^*(\xan, \F_{an}^\bullet)$$
\end{thm}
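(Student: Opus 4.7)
The plan is to reduce the theorem to the classical Serre GAGA (Theorem \ref{thm:gaga}) via the second hypercohomology spectral sequence on both the algebraic and analytic sides.

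First, I would construct a natural comparison map
$$\hypc^*(X,\F^\bullet)\longrightarrow\hypc^*(\xan,\F^\bullet_{an}).$$
One concrete way: choose a Cartan--Eilenberg (or injective) resolution $\F^\bullet\to\mathcal{I}^{\bullet,\bullet}$ on $X$, analytify termwise, and compare global sections with an injective resolution of $\F^\bullet_{an}$ on $\xan$. Equivalently, combine the unit $\F^\bullet\to Rh_*h^*\F^\bullet$ of the continuous map $h:\xan\to X$ with the canonical morphism $h^*\F^\bullet\to\F^\bullet_{an}$ coming from Definition \ref{def:analytification}, and then apply $R\Gamma(X,-)$.

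Second, I would set up the second hypercohomology spectral sequence on the algebraic side,
$$E_2^{p,q}=H^p(X,\mathcal{H}^q(\F^\bullet))\Longrightarrow\hypc^{p+q}(X,\F^\bullet),$$
and its analytic analogue on $\xan$. The cohomology sheaves $\mathcal{H}^q(\F^\bullet)$ are coherent algebraic sheaves, since the category of coherent sheaves is stable under kernels and cokernels in $\algs(X)$. Because the analytification functor $(-)_{an}$ is exact, there is a canonical identification $\mathcal{H}^q(\F^\bullet_{an})\cong\mathcal{H}^q(\F^\bullet)_{an}$, so after analytification each $E_2$-term becomes $H^p(\xan,\mathcal{H}^q(\F^\bullet)_{an})$.

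Third, classical Serre GAGA applied to the coherent sheaf $\mathcal{H}^q(\F^\bullet)$ gives an isomorphism $H^p(X,\mathcal{H}^q(\F^\bullet))\simeq H^p(\xan,\mathcal{H}^q(\F^\bullet)_{an})$ term by term, so the comparison map from the first step induces a morphism of spectral sequences that is an isomorphism on the $E_2$-page. By the spectral sequence comparison theorem, the induced map on abutments is an isomorphism, which is exactly the desired statement.

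The main obstacle I expect is convergence of the two spectral sequences: this requires $\F^\bullet$ to be bounded (at least bounded below), together with the finite cohomological dimension of $X$ and $\xan$ for coherent sheaves, so that for each total degree $p+q$ only finitely many differentials contribute. A secondary, more technical point is verifying that the comparison map constructed abstractly really agrees at $E_2$ with the map induced by analytifying cohomology sheaves; this amounts to diagram-chasing with Cartan--Eilenberg resolutions and is standard but needs care to state correctly.
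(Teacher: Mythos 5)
Your proof is correct, but it runs through a different spectral sequence than the paper does. The paper uses the \emph{first} hypercohomology spectral sequence, with $E_1^{p,q}=H^q(X,\F^p)$: since each term $\F^p$ of the complex is coherent by hypothesis, classical GAGA applies directly to every $E_1$-entry, and the isomorphism of abutments follows in one line. You instead use the \emph{second} spectral sequence, $E_2^{p,q}=H^p(X,\mathcal{H}^q(\F^\bullet))$, which forces you to insert two extra observations the paper does not need: that the cohomology sheaves $\mathcal{H}^q(\F^\bullet)$ are again coherent (true, since $\Coh_{alg}(X)$ is an abelian subcategory closed under kernels and cokernels), and that exactness of $(-)_{an}$ identifies $\mathcal{H}^q(\F^\bullet_{an})$ with $\mathcal{H}^q(\F^\bullet)_{an}$. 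What your route buys in exchange is that it depends only on the quasi-isomorphism class of $\F^\bullet$, so it actually establishes the slightly stronger statement that the conclusion holds for any bounded complex with coherent cohomology sheaves, not just one with coherent terms. Your explicit attention to the two points the paper glosses over --- that the termwise isomorphisms must come from an actual morphism of spectral sequences (induced by the unit $\F^\bullet\to Rh_*h^*\F^\bullet$ composed with $h^*\F^\bullet\to\F^\bullet_{an}$) before the comparison theorem applies, and that convergence requires boundedness of the complex together with finite cohomological dimension --- is warranted; both issues are equally present in the paper's $E_1$-argument and are left implicit there.
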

\begin{proof}
    According to Serre's GAGA, we know that $\hypc^p(X, \F^q)\simeq\hypc^p(\xan, \F_{an}^q)$ holds for $\forall p, q$.
    These induce isomorphisms at the $E_1$-stage of two spectral sequences of the hypercohomologies, thus we obtain an isomorphism at the $E_\infty$-stage $\hypc(X, \F^\bullet)\simeq\hypc(\xan, \F_{an}^\bullet)$.
\end{proof}

One might want that the following application of Serre's GAGA to be established:
\begin{equation}\label{eqn:expected1}
    \hypc^*(X, X-Y; \F^\bullet) \simeq \hypc^*(X_{an}, X_{an}-Y_{an}; \F_{an}^\bullet)
\end{equation}
where $\F$ is a complex of algebraic coherent sheaves. 
First, since we are working in the setting of a locally closed subspace \ref{thm:local_duality}, we have $i^!\simeq Ri^!$ by a slight abuse of notation. 
Hence, we will denote the right adjoint functor of $Ri_!$ as $Ri^!$, and denote the right adjoint functor of $i_!$ as $i^!$ in this section.
And we know that $i_!$ is exact by Proposition \ref{prop:exactness}, and thus $Ri_!\simeq i_!$ by \cite[10.5.2]{weibel}.
Therefore, what we want is the following relation:
\begin{equation}\label{eqn:expected2}
    \hypc^*(X; i_!Ri^!\F^\bullet) \simeq \hypc^*(X_{an}; i_!Ri^!\F_{an}^\bullet).
\end{equation}
To prove \eqref{eqn:expected2} using Theorem \ref{thm:hypergaga}, 
we have to show that $i_!Ri^!\F^\bullet$ is quasi-isomorphic to a complex of coherent sheaves and that there exists a quasi-isomorphism
\begin{equation}
    (i_!Ri^!\F^\bullet)_{an} \stackrel{qis}{\simeq} (i_{an})_!R(i_{an})^!\F^\bullet.
\end{equation}
Since $(-)_{an}$ is exact, it can be naturally extended to derived category, i.e. $(-)_{an}:D^+\algs(X)\rightarrow D^+\ans(X_{an})$.

Unfortunately, there are not enough injective objects in the category of coherent sheaves $Coh(X)$.
Thus, it is difficult to prove that the result of the derived functor is coherent 
and that analytification functor commutes with it.
Therefore, in order to obtain the result in \eqref{eqn:expected2}, we will consider certain situations and conditions.

Note that the category of quasi-coherent sheaves $\QCoh(X)$ has enough injectives, thus the two derived functors 
\begin{center}
    \begin{tikzcd}
        D^+\QCoh(Y) \arrow[r, "Ri_!"', shift right] & D^+\QCoh(X) \arrow[l, "Ri^!"', shift right]
    \end{tikzcd}
\end{center}
are well-defined. 
Hence, we can think of the derived categories that we are using as either the category of algebraic (resp. analytic) sheaves 
or the category of quasi-coherent algebraic (resp. analytic) sheaves.

\subsection{Open subvariety}
Let $Y$ be an open subvariety of $X$, and $i:Y\hookrightarrow X$ be the natural inclusion. 
In this case, we know that $i^!=i^*$ is exact (see Proposition \ref{prop:exactness}).
Since the derived functor of an exact functor is a trivial extension of original functor (see \cite[10.5.2]{weibel}), we get
\begin{equation}\label{eqn:opensub}
    Ri_! Ri^!\simeq i_! i^*.
\end{equation}

Let us denote $\F^Y:=i_!i^*\F$ and the restriction of $\F$ to $Z$ as $\F\rvert_Z:=i^*\F$. Then we have the following proposition.
\begin{prop}\label{prop:unique_restriction}
    Let $Z$ be a locally closed subspace. $\F^Z$ is the unique sheaf satisfying the following conditions:
    \begin{alignat*}{2}
        &\left.\F^Z\right\rvert_Z &&= \F\rvert_Z\\
        &\left.\F^Z\right\rvert_{X-Z} &&= 0.
    \end{alignat*}
\end{prop}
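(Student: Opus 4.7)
The plan is to verify existence by a direct stalk computation, and then deduce uniqueness from the observation made immediately after Theorem~\ref{thm:local_duality}: any sheaf on $X$ whose stalks vanish outside $Z$ is canonically isomorphic to $i_! i^*$ of itself.

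For existence, I would compute the stalks of $\F^Z = i_! i^*\F$. At a point $z \in Z$ the natural map $i^* i_! \to \operatorname{id}_{\Sh(Z)}$ is an isomorphism (checked directly from the definition of $i_!$, whose sections over a neighborhood of $z$ recover sections of $i^*\F$ on the relative open $V \cap Z$), so $(i_! i^*\F)_z \cong (i^*\F)_z = \F_z$. At a point $x \in X - Z$ the support condition built into the definition of $i_!$ immediately forces $(i_! i^*\F)_x = 0$. Applying $i^*$ on one side and restricting to $X - Z$ on the other then yields $\F^Z|_Z = \F|_Z$ and $\F^Z|_{X-Z} = 0$, as required.

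For uniqueness, suppose $\G \in \Sh(X)$ also satisfies the two listed conditions. The equality $\G|_{X-Z} = 0$ is equivalent to $\G_x = 0$ for all $x \notin Z$, which is precisely the hypothesis of the remark following Theorem~\ref{thm:local_duality}; that remark therefore furnishes a canonical isomorphism $\G \simeq i_! i^* \G$. Since $i^*\G = \G|_Z = \F|_Z = i^*\F$, we conclude $\G \simeq i_! i^*\F = \F^Z$, giving uniqueness up to canonical isomorphism.

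I do not foresee a serious obstacle. The only mild subtlety is that $X - Z$ need not be open when $Z$ is merely locally closed, so the notation $\F^Z|_{X-Z}$ must be interpreted as the restriction of $\F^Z$ to the subspace $X - Z$; this interpretation is equivalent to the vanishing of all stalks at points outside $Z$, which is exactly the input that powers the uniqueness step.
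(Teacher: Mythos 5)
Your proposal is correct, but there is nothing in the paper to compare it against: the paper's entire ``proof'' of this proposition is the citation to \cite[II.2.9.2]{godement}, with no in-text argument. What you supply is the standard self-contained verification, and it is essentially what that reference contains. Existence follows from the stalk computation $(i_!i^*\F)_x\cong\F_x$ for $x\in Z$ and $(i_!i^*\F)_x=0$ for $x\notin Z$; uniqueness follows because any $\G$ with $\G\rvert_{X-Z}=0$ has all stalks vanishing off $Z$, hence is canonically isomorphic to $i_!i^*\G$ by the remark following Theorem~\ref{thm:local_duality}, and then $i^*\G=\F\rvert_Z$ gives $\G\simeq i_!(\F\rvert_Z)=\F^Z$. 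The gain of your version is that it makes visible exactly where each hypothesis enters, and you correctly flag the one genuine subtlety, namely that $X-Z$ need not be open so the second condition must be read stalkwise. The one step you gloss over slightly is the surjectivity of $(i_!i^*\F)_z\to\F_z$ for $z\in Z$: an arbitrary germ is represented by a section of $i^*\F$ on some $Z\cap U$ that need not satisfy the relative-closedness condition on its support appearing in the definition of $i_!$, and one must first shrink $U$ (using that $Z$ is locally closed) until $Z\cap U$ is closed in $U$, after which the support condition is automatic. With that sentence added, your argument is complete.
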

\begin{proof}
    See \cite[II.2.9.2]{godement}.
\end{proof}

We have the following theorem in this case.
\begin{thm}\label{thm:open_case}
    Let $X$ be a complex projective variety, $Y$ be an open subvariety of $X$, and $i:Y\hookrightarrow X$ be the natural inclusion.
    If $\F^\bullet$ is a complex of algebraic sheaves on $X$ such that $(\F^Y)^\bullet$ is a complex of coherent sheaves, then there is an isomorphism
    $$\hypc^*(X, X-Y; \F^\bullet)\simeq\hypc^*(X_{an}, X_{an}-Y_{an}; \F_{an}^\bullet).$$
\end{thm}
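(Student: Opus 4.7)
The plan is to reduce both sides of the desired isomorphism to ordinary hypercohomology of a coherent complex, then invoke the hyper-GAGA theorem (Theorem \ref{thm:hypergaga}) directly, after checking that analytification intertwines the extension-by-zero functor $i_! i^*$ with its analytic counterpart.

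First I would use equation \eqref{eqn:opensub} on both the algebraic and the analytic side. Since $Y$ is open in $X$, the inclusion $i_{an} : Y_{an} \hookrightarrow X_{an}$ is also an open embedding, so the same simplification $Ri_! Ri^! \simeq i_! i^*$ applies analytically. Thus
$$\hypc^*(X, X-Y;\F^\bullet) \simeq \hypc^*(X; i_! i^* \F^\bullet) = \hypc^*(X;(\F^Y)^\bullet),$$
and likewise
$$\hypc^*(X_{an}, X_{an}-Y_{an}; \F_{an}^\bullet) \simeq \hypc^*(X_{an}; (\F_{an})^{Y_{an}, \bullet}).$$
The right-hand side of the algebraic equation now involves the complex $(\F^Y)^\bullet$, which is coherent by hypothesis, so Theorem \ref{thm:hypergaga} applies and yields
$$\hypc^*(X;(\F^Y)^\bullet)\simeq \hypc^*(X_{an}; ((\F^Y)^\bullet)_{an}).$$

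It then remains to identify $((\F^Y)^\bullet)_{an}$ with $(\F_{an})^{Y_{an}, \bullet}$ as complexes of sheaves on $X_{an}$. For this I would appeal to the uniqueness statement in Proposition \ref{prop:unique_restriction}, applied term-by-term: it suffices to check that $((\F^q)^Y)_{an}$, for each $q$, has the same restriction to $Y_{an}$ as $\F^q_{an}$ and vanishes on $X_{an}-Y_{an}$. Compatibility of analytification with restriction to an open Zariski subset is immediate from its construction via $h^* (-) \otimes_{\oan} \han$, since $h^{-1}(Y) = Y_{an}$. Vanishing on $X_{an} - Y_{an}$ follows from the stalk formula $(\G_{an})_x = \G_{h(x)} \otimes_{\mathscr{O}_{X,h(x)}} \mathscr{O}_{X_{an},x}$, since $\F^Y$ has zero stalks outside $Y$.

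The main conceptual obstacle is the third step, namely verifying that analytification commutes with the extension-by-zero $i_!i^*$. This is where the openness of $Y$ is truly used: if $Y$ were only locally closed, the description of $\F^Y$ via sections with support conditions would interact with the \emph{inverse image} part of analytification in a more delicate way, and simple stalk-wise vanishing would no longer be the end of the story. In the open case, both checks (restriction and vanishing) are entirely stalk-local and the argument goes through cleanly, so once this identification is in place the theorem follows by chaining the three isomorphisms displayed above.
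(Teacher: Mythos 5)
Your proposal is correct and follows the same overall strategy as the paper: reduce both sides via $Ri_!Ri^!\simeq i_!i^*$ (equation \eqref{eqn:opensub}), establish the key identification $(\F^Y)_{an}\simeq(\F_{an})^{Y_{an}}$, and conclude with Theorem \ref{thm:hypergaga}. Where you genuinely diverge is in the proof of that identification. The paper factors $(-)_{an}$ as $h^*(-)\otimes_{\oan}\han$ and treats the two pieces separately: Lemma \ref{lem:commute} shows that $h^*$ commutes with $i_!i^*$ via the base change Theorem \ref{thm:basechange}, and Lemma \ref{lem:tensor_commute} shows that $-\otimes_{\oan}\han$ commutes with $(-)^{Y_{an}}$ by first rewriting $\G^Z$ as $\G\otimes_{\mathscr{O}_X}\mathscr{O}_X^Z$ (this is where the paper uses Proposition \ref{prop:unique_restriction}) and then invoking associativity of the tensor product. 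You instead apply Proposition \ref{prop:unique_restriction} directly to $((\F^q)^Y)_{an}$, verifying the two characterizing conditions by locality of $h^*(-)\otimes_{\oan}\han$ on open sets and by a stalk computation. This is more elementary --- it bypasses both the base-change theorem and the tensor description of $\F^Z$ --- and it succeeds precisely because, for an open immersion, both conditions are stalk-local; the paper's factorized route is longer but pays off later, since Lemma \ref{lem:commute} is reused verbatim in the closed-subvariety case, where your stalk-wise shortcut would not suffice. Two small points: the stalk of $\G_{an}$ at $x$ is $\G_{h(x)}\otimes_{\mathscr{O}_{X,h(x)}}(\han)_x$ rather than $\G_{h(x)}\otimes_{\mathscr{O}_{X,h(x)}}\mathscr{O}_{X_{an},x}$ (in this paper's conventions $\oan=h^*\oalg$, so $(\oan)_x=\mathscr{O}_{X,h(x)}$), though the vanishing conclusion is unaffected; and to assemble your term-by-term identifications into an isomorphism of complexes you should observe that the extension-by-zero identification of Proposition \ref{prop:unique_restriction} is natural in the sheaf, so that the isomorphisms commute with the differentials --- a remark equally applicable to the paper's own proof.
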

\begin{proof}
    Since an open subset of $X$ is also open in $X_{an}$, the natural inclusion of analytic spaces $i_{an}:Y_{an}\hookrightarrow X_{an}$ is also an inclusion of the open subspace.
    By the definition and \eqref{eqn:opensub},
    $$\hypc^*(X, X-Y; \F^\bullet) = \hypc^*(X; Ri_! Ri^!\F^\bullet) \simeq \hypc^*(X; i_! i^*\F^\bullet)$$
    and
    $$\hypc^*(X_{an}, X_{an}-Y_{an}; \F_{an}^\bullet) = \hypc^*(X_{an}; R(i_{an})_! R(i_{an})^!(\F_{an})^\bullet) \simeq \hypc^*(X_{an}; (i_{an})_! (i_{an})^*(\F_{an})^\bullet).$$
    If we can prove that
    \begin{equation}
        (i_!i^*\F^\bullet)_{an} \stackrel{qis}{\simeq} (i_{an})_! (i_{an})^*(\F_{an})^\bullet,
    \end{equation}
    then we can get the result by using Theorem \ref{thm:hypergaga} since $i_! i^*\F^\bullet=(\F^Y)^\bullet$ is algebraic coherent.
    We shall prove an isomorphism between the complexes, rather than the quasi-isomorphism.
    According to the definition of analytification \ref{def:analytification}, we have to prove
    \begin{equation}
        h^*(i_!i^*\F)\otimes_{\oan}\han \simeq (i_{an})_! (i_{an})^*(h^*\F\otimes_{\oan}\han).
    \end{equation}
    where $h:X_{an}\rightarrow X$ is the canonical continuous map.

    First, we can deduce that $i_!i^*$ and $h^*$ commute:
    \begin{lem}\label{lem:commute}
        For $\G\in\algs(X)$,
        $$h^*(i_! i^* \G) \simeq (\ian)_! (\ian)^* (h^*\G)$$
    \end{lem}
    \begin{subproof}[Subproof]
        We have a commutative diagram below:
        \begin{center}\label{diagram}
            \begin{tikzcd}
                Y_{an} \arrow[r, "h'"] \arrow[d, "i_{an}"', hook] & Y \arrow[d, "i", hook] \\
                X_{an} \arrow[r, "h"']                            & X                     
            \end{tikzcd}
        \end{center}
        where $h'$ is the restriction of $h$ to $Y_{an}$.
        Note that $Y_{an}$ is equal to $h^{-1}(Y)$.
        By Theorem \ref{thm:basechange}, we have 
        $$h^* i_!=(\ian)_! h'^*.$$
        Thus, we obtain
        \begin{align}
            \begin{split}
                h^* i_! i^* &= (\ian)_! h'^* i^*\\
                            &= (\ian)_! (i h')^* \\
                            &= (\ian)_! (h \ian)^*\\
                            &= (\ian)_! (\ian)^* h^*.
            \end{split}
        \end{align}
    \end{subproof}
    It remains to show the isomorphism below:
    \begin{equation}\label{eqn:rest_anal}
        (h^*\F)^{Y_{an}}\otimes_{\oan}\han\simeq \left((h^*\F)\otimes_{\oan}\han\right)^{Y_{an}}
    \end{equation}

    \begin{lem}
        Let $(X, \mathscr{O}_X)$ be a ringed space and $Z$ be its locally closed subspace with canonical inclusion.
        If $\G$ is a sheaf of $\mathscr{O}_X$-module, then there is an isomorphism
        $$\G^Z\simeq\G\otimes_{\mathscr{O}_X}\mathscr{O}_X^Z$$
    \end{lem}
    \begin{subproof}[Subproof]
        Using Proposition \ref{prop:unique_restriction}, we have to prove that $\G\otimes_{\mathscr{O}_X}\mathscr{O}_X^Z$ is also satisfying the conditions:
        \begin{equation}\label{eq:wts_cond}
            \begin{alignedat}{2}
                    &\left.\left(\G\otimes_{\mathscr{O}_X}\mathscr{O}_X^Z\right)\right\rvert_Z &&= \G\rvert_Z\\
                    &\left.\left(\G\otimes_{\mathscr{O}_X}\mathscr{O}_X^Z\right)\right\rvert_{X-Z} &&= 0.
            \end{alignedat}
        \end{equation}
        Let $i:Z\hookrightarrow X$ and $j:X-Z\hookrightarrow X$ be the natural inclusions.
        We can deduce
        \begin{align}
            \begin{split}
                \left.\left(\G\otimes_{\mathscr{O}_X}\mathscr{O}_X^Z\right)\right\rvert_Z
                &= i^*\left(\G\otimes_{\mathscr{O}_X}\mathscr{O}_X^Z\right)\\
                &= i^*\G \otimes_{\mathscr{O}_Z} i^*\mathscr{O}_X^Z\\
                &= i^*\G \otimes_{\mathscr{O}_Z} i^*\mathscr{O}_X\\
                &= i^*\left(\G \otimes_{\mathscr{O}_X} \mathscr{O}_X\right)\\
                &= i^*\G\\
                &= \G\rvert_Z.
            \end{split}
        \end{align}
        Similary, we obtain
        \begin{align}
            \begin{split}
                \left.\left(\G\otimes_{\mathscr{O}_X}\mathscr{O}_X^Z\right)\right\rvert_{X-Z}
                &= j^*\G \otimes_{\mathscr{O}_{X-Z}} j^*\mathscr{O}_X^Z\\
                &= j^*\G \otimes_{\mathscr{O}_{X-Z}} 0\\
                &= 0.
            \end{split}
        \end{align}
        Therefore, the conditions \eqref{eq:wts_cond} are satisfied.
    \end{subproof}

    \begin{lem}\label{lem:tensor_commute}
        For $\G\in\algs(X_{an})$,
        $\G^{Y_{an}}\otimes_{\oan}\han\simeq \left(\G\otimes_{\oan}\han\right)^{Y_{an}}$
    \end{lem}
    \begin{subproof}[Subproof]
        We have 
        $$\G^{Y_{an}}\otimes_{\oan}\han\simeq \left(\G\otimes_{\oan}(\oan)^{Y_{an}}\right)\otimes_{\oan}\han$$ 
        and 
        $$\left(\G\otimes_{\oan}\han\right)^{Y_{an}} \simeq \left(\G\otimes_{\oan}\han\right)\otimes_{\oan}(\oan)^{Y_{an}}.$$
        Thus, the result follows from the associativity of the tensor product."
    \end{subproof}

    According to the above lemmas, we get an isomorphism of \eqref{eqn:rest_anal}.
\end{proof}

\begin{cor}
    Theorem \ref{thm:open_case} implies the original GAGA \ref{thm:gaga} and the Hyper-GAGA \ref{thm:hypergaga}.
\end{cor}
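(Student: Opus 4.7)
The plan is to specialize Theorem~\ref{thm:open_case} to the trivial open inclusion $Y = X$, which should collapse the relative statement to an absolute one. First, I would observe that when $i: X \hookrightarrow X$ is the identity map, the defining formulas for $i_!$, $\rsp_i$, and $i^*$ all yield the identity functor, so $i_! i^* \F^\bullet = \F^\bullet$ for any complex $\F^\bullet$ on $X$. Consequently, if $\F^\bullet$ is a complex of coherent algebraic sheaves, then $(\F^Y)^\bullet = \F^\bullet$ is automatically coherent, so the hypothesis of Theorem~\ref{thm:open_case} is satisfied with no extra work.

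Next, I would note that the complement $X - Y$ is empty, so the relative hypercohomologies on both sides reduce to ordinary hypercohomologies:
$$\hypc^*(X, X - Y; \F^\bullet) = \hypc^*(X; \F^\bullet), \qquad \hypc^*(X_{an}, X_{an} - Y_{an}; \F_{an}^\bullet) = \hypc^*(X_{an}; \F_{an}^\bullet).$$
The conclusion of Theorem~\ref{thm:open_case} therefore becomes $\hypc^*(X; \F^\bullet) \simeq \hypc^*(X_{an}; \F_{an}^\bullet)$, which is precisely the Hyper-GAGA statement (Theorem~\ref{thm:hypergaga}).

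Finally, to recover the original GAGA (Theorem~\ref{thm:gaga}), I would apply the Hyper-GAGA just obtained to the complex consisting of a single coherent sheaf $\F$ placed in degree zero. On both sides the hypercohomology then collapses to ordinary sheaf cohomology, yielding $H^*(X, \F) \simeq H^*(X_{an}, \F_{an})$. There is essentially no obstacle in this argument, as it is a pure bookkeeping reduction; the only point worth double-checking is that the derived functors $Ri_!$ and $Ri^!$ both genuinely coincide with the identity when $i = \id_X$, which follows from the exactness of $i_!$ and $i^*$ noted in Proposition~\ref{prop:exactness} together with the triviality of deriving an exact functor.
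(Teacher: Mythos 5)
Your proposal is correct and follows essentially the same route as the paper: both specialize Theorem~\ref{thm:open_case} to $Y = X$, $i = \id_X$, observe that $Ri_!Ri^!$ collapses to the identity functor so the coherence hypothesis reduces to coherence of $\F^\bullet$ itself, and thereby recover Hyper-GAGA and hence the original GAGA. Your additional remarks (the empty complement and the degree-zero complex reduction) are just slightly more explicit bookkeeping than the paper's version.
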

\begin{proof}
    Suppose $Y=X$ and $i=\id_X$.
    Then we have $Ri_!Ri^!\simeq i_!i^!\simeq \rsp_{\id_X} = \id_{\algs(X)}$.
    Applying Theorem \ref{thm:open_case} with the condition that $(\F^Y)^\bullet=\F^\bullet$ is coherent, we get
    $$\hypc^*(X; \F^\bullet)\simeq\hypc^*(X_{an}; \F_{an}^\bullet).$$
    This implies \ref{thm:hypergaga} and also \ref{thm:gaga}.
\end{proof}

\subsection{Closed subvariety}
Let $Y$ be a closed subvariety of a complex projective variety $X$ and $i:Y\hookrightarrow X$ be a canonical inclusion. 
In this case, we know that $i_!=i_*$ is exact (see Proposition \ref{prop:exactness}). Thus we obtain
\begin{equation}
    Ri_! Ri^!\simeq i_* Ri^!.
\end{equation}

We will give an acyclic condition to remove the derived functor.
The following lemma is originally provided in \cite[2.4]{tohoku}, and we give a proof with reference \cite[Theorem 2.4.2]{algdr}.
\begin{prop}\label{prop:acyclic}
    Let $\F^\bullet$ be a complex of $i^!$-acyclic sheaves. 
    Then the value of the derived functor $Ri^!$ of $\F^\bullet$ is isomorphic to the value of $i^!$ in the derived category $D^+\algs(X)$:
    $$Ri^!(\F^\bullet)\simeq i^!\F^\bullet$$
\end{prop}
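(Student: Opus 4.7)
The plan is a standard acyclic-replacement argument via a Cartan–Eilenberg double-complex spectral sequence. Because $\QCoh(X)$ has enough injectives, $Ri^!$ may be computed by resolving $\F^\bullet$ by injectives and applying $i^!$, so the task reduces to comparing this derived value with the naive application $i^!(\F^\bullet)$.

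First I would choose a Cartan–Eilenberg resolution $\F^\bullet \to I^{\bullet,\bullet}$ in which each $I^{p,q}$ is injective and each column $I^{p,\bullet}$ is an injective resolution of $\F^p$. For our bounded-below complex the totalization $K^\bullet := \operatorname{Tot}(I^{\bullet,\bullet})$ is a bounded-below complex of injectives quasi-isomorphic to $\F^\bullet$, so by definition $Ri^!(\F^\bullet) \simeq i^!(K^\bullet) = \operatorname{Tot}(i^!I^{\bullet,\bullet})$ in $D^+$.

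Next I would analyze the column-filtered spectral sequence of the double complex $C^{p,q} := i^!(I^{p,q})$, which converges to $H^{p+q}(\operatorname{Tot} C^{\bullet,\bullet})$ by boundedness below. Its $E_1$ page is the vertical cohomology $E_1^{p,q} = H^q(i^!I^{p,\bullet}) = R^q i^!(\F^p)$, which by the $i^!$-acyclicity hypothesis vanishes for $q > 0$ and equals $i^!(\F^p)$ for $q = 0$, with horizontal $d_1$ induced by the differential of $\F^\bullet$. Hence the $E_2$ page is concentrated in the single row $q = 0$ with $E_2^{p,0} = H^p(i^!\F^\bullet)$, and the spectral sequence degenerates.

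Finally, the augmentation $i^!(\F^\bullet) \to \operatorname{Tot}(i^!I^{\bullet,\bullet})$ induced by the column maps $\F^p \hookrightarrow I^{p,0}$ realizes the degenerated $E_2$-identification on cohomology, hence is a quasi-isomorphism, yielding $i^!\F^\bullet \simeq Ri^!\F^\bullet$ in $D^+$. The main technical ingredient is the existence of a Cartan–Eilenberg resolution in $\QCoh(X)$ together with the verification that this augmentation is the intended comparison map; neither is a genuine obstacle, and the same argument transports verbatim to the analytic side when needed later.
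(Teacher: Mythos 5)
Your proposal is correct and follows essentially the same route as the paper: a double complex built from injective resolutions of the terms of $\F^\bullet$, whose column-filtered spectral sequence collapses at $E_1$ by the $i^!$-acyclicity hypothesis and degenerates at $E_2$ onto $H^p(i^!\F^\bullet)$. If anything, your version is slightly more careful than the paper's, since you exhibit the augmentation $i^!\F^\bullet \to \operatorname{Tot}(i^!I^{\bullet,\bullet})$ as the map realizing the isomorphism in $D^+$, whereas the paper passes directly from isomorphic cohomology groups to a quasi-isomorphism.
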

\begin{proof}
    Let $K^{\bullet, \bullet}$ be a double complex where $K^{\bullet, 0}$ is $\F^\bullet$, and $K^{p, \bullet}$ is an injective resolution of $\F^p$.
    The $i^!$-acyclic condition means that the $E_1$-stage of the spectral sequence of $K^{\bullet, \bullet}$, which is induced by $i^!$, is
    \begin{align*}
        E_1^{p, q}=h^p Ri^! (K^{\bullet, q})=
        \begin{cases}
            i^!\F^q &\text{ if }p=0\\
            0   &\text{ if }p>0.
        \end{cases}
    \end{align*}

    \begin{center}
        \begin{tikzpicture}
            \matrix (m) [matrix of math nodes,
            nodes in empty cells, nodes={minimum width=4ex,
            minimum height=3ex, outer sep=-1pt},
            column sep=1ex,row sep=1ex]{
                        q   &         &     &     &\\
                        2   & i^!\F^2 &  0  &  0  &\\
                        1   & i^!\F^1 &  0  &  0  &\\
                        0   & i^!\F^0 &  0  &  0  &\\
                        E_1 & 0 &  1  &  2  & p\strut \\};
            \draw[<-, thick] (m-1-1.east) -- (m-5-1.east) ;
            \draw[->, thick] (m-5-1.north) -- (m-5-5.north) ;
        \end{tikzpicture}
    \end{center}
    Thus, the $E_2$-stage is 
    \begin{align*}
        E_2^{p, q}=h^{p,q}E_1 =
        \begin{cases}
            h^q (i^!\F^\bullet) &\text{ if }p=0\\
            0   &\text{ if }p>0.
        \end{cases}
    \end{align*}
    It means $h^kRi^!(\F^\bullet)\simeq h^k(i^!\F^\bullet)$. Thus we get 
    $$Ri^!(\F^\bullet) \stackrel{qis}{\simeq} i^!\F^\bullet.$$
\end{proof}

In this case of a closed subvariety, we need more conditions than the open subvariety case.
\begin{thm}\label{thm:closed_case}
    Let $X$ be a complex projective variety, $Y$ be its closed subvariety, and $i:Y\hookrightarrow X$ be a natural inclusion.
    Suppose $\F^\bullet$ is a complex of locally free $i^!$-acyclic algebraic sheaves on $X$, $h^*\F^\bullet$ is flasque, and $i_*i^!\F^\bullet$ is quasi-isomorphic to a complex of coherent sheaves. 
    Then there is an isomorphism
    $$\hypc^*(X, X-Y; \F^\bullet)\simeq\hypc^*(X_{an}, X_{an}-Y_{an}; \F_{an}^\bullet).$$
\end{thm}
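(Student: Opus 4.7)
The plan is to parallel the proof of Theorem \ref{thm:open_case}: simplify the derived functors on each side using the stated acyclicity hypotheses until Hyper-GAGA (Theorem \ref{thm:hypergaga}) applies. On the algebraic side, Proposition \ref{prop:exactness} gives $Ri_!\simeq i_!=i_*$ since $i$ is closed, and Proposition \ref{prop:acyclic} combined with the $i^!$-acyclicity hypothesis gives $Ri^!\F^\bullet\simeq i^!\F^\bullet$, yielding
$$\hypc^*(X, X-Y; \F^\bullet)\simeq\hypc^*(X; i_*i^!\F^\bullet).$$
Because $i_*i^!\F^\bullet$ is quasi-isomorphic to a complex of coherent sheaves, Theorem \ref{thm:hypergaga} identifies this with $\hypc^*(X_{an}; (i_*i^!\F^\bullet)_{an})$.

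Next I would identify the analytification with $(\ian)_*(\ian)^!\F_{an}^\bullet$. Since a Zariski-closed subset is closed in the analytic topology, $\ian$ is again a closed inclusion, and both $i_*i^!$ and $(\ian)_*(\ian)^!$ coincide with the ``sections supported on $Y$'' functor $(-)^Y$. Using base change (Theorem \ref{thm:basechange}) along the square of Lemma \ref{lem:commute} together with the unique characterization of $(-)^Z$ from Proposition \ref{prop:unique_restriction}, one verifies $h^*(\F^Y)\simeq(h^*\F)^{Y_{an}}$; tensoring with $\han$ and invoking Lemma \ref{lem:tensor_commute} gives
$$(i_*i^!\F^\bullet)_{an}\simeq (h^*\F^\bullet)^{Y_{an}}\otimes_{\oan}\han \simeq (h^*\F^\bullet\otimes_{\oan}\han)^{Y_{an}} = (\ian)_*(\ian)^!\F_{an}^\bullet.$$

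To close the loop, one must match $\hypc^*(X_{an}; (\ian)_*(\ian)^!\F_{an}^\bullet)$ with the analytic relative hypercohomology $\hypc^*(X_{an}; R(\ian)_!R(\ian)^!\F_{an}^\bullet)$. The outer $R(\ian)_!\simeq(\ian)_*$ is immediate, while $R(\ian)^!\F_{an}^\bullet\simeq(\ian)^!\F_{an}^\bullet$ demands $(\ian)^!$-acyclicity of $\F_{an}^\bullet$; this is where the flasque hypothesis enters, since flasque sheaves are $\rsp$-acyclic and $(\ian)^*$ is exact (Proposition \ref{prop:exactness}), making $h^*\F^\bullet$ itself $(\ian)^!$-acyclic. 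The local-freeness of $\F^\bullet$ and flatness of $\han$ over $\oan$ should then transfer this acyclicity to $\F_{an}^\bullet=h^*\F^\bullet\otimes_{\oan}\han$ through the tensor identity $(-)^{Y_{an}}\simeq-\otimes_{\oan}(\oan)^{Y_{an}}$ underpinning Lemma \ref{lem:tensor_commute}. The main obstacle I anticipate is precisely this last step: flasqueness is not preserved under arbitrary tensor products, and the acyclicity must be propagated from $h^*\F^\bullet$ to $\F_{an}^\bullet$ via a delicate stalkwise or local argument that exploits the locally free structure together with the flatness of $\han$ to reduce the computation of $R(\ian)^!\F_{an}^\bullet$ to the (flasque) $h^*\F^\bullet$ component.
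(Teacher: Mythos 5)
Your opening reductions match the paper: $Ri_!\simeq i_!=i_*$ by Proposition \ref{prop:exactness}, $Ri^!\F^\bullet\simeq i^!\F^\bullet$ from Proposition \ref{prop:acyclic} and the acyclicity hypothesis, and Hyper-GAGA applied to the coherent replacement of $i_*i^!\F^\bullet$. The gap is in how you propose to establish $(i_*i^!\F^\bullet)_{an}\simeq(\ian)_*(\ian)^!\F_{an}^\bullet$. You want to rerun the open-case machinery: characterize $(-)^{Y_{an}}$ by Proposition \ref{prop:unique_restriction} and commute it with $-\otimes_{\oan}\han$ via Lemma \ref{lem:tensor_commute}, i.e.\ via the identity $\G^Z\simeq\G\otimes_{\mathscr{O}_X}\mathscr{O}_X^Z$. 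But for a \emph{closed} subvariety the relevant functor $i_*i^!=\rsp_i$ is the subsheaf of sections supported on $Y$, not the extension by zero $i_!i^*$; it does not satisfy $(\rsp_i\F)\rvert_Y=\F\rvert_Y$ (for instance $\rsp_i\mathscr{O}_X=0$ when $Y$ is a point of an irreducible curve), so neither the uniqueness characterization of Proposition \ref{prop:unique_restriction} nor the tensor identity underlying Lemma \ref{lem:tensor_commute} applies. This is precisely why the closed case carries extra hypotheses and needs a different argument.

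What the paper does instead, and what is missing from your proposal: for the commutation with $h^*$ it supplements Lemma \ref{lem:commute} with a direct computation of sections showing $h^*\rsp_i=\rsp_{\ian}h^*$; and for the commutation with $-\otimes_{\oan}\han$ it uses the flasqueness of $h^*\F$ to obtain the short exact sequence
$$0\longrightarrow(\ian)_*(\ian)^!\G\longrightarrow\G\longrightarrow(j_{an})_*(j_{an})^*\G\longrightarrow 0$$
with $j:X-Y\hookrightarrow X$ the \emph{open} complement, compares this sequence with the one obtained by tensoring termwise with the flat sheaf $\han$, and thereby reduces the problem to the open inclusion $j$, where the projection formula (Lemma \ref{lem:tensor_expression}, which is where local freeness is used) and associativity of the tensor product finish the job. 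So flasqueness enters to make the map $\G\rightarrow(j_{an})_*(j_{an})^*\G$ surjective, not, as you suggest, to establish $(\ian)^!$-acyclicity of $\F_{an}^\bullet$. (Your remark that one still ought to check $R(\ian)^!\F_{an}^\bullet\simeq(\ian)^!\F_{an}^\bullet$ on the analytic side is a fair one --- the paper does not address it explicitly --- but your proposal does not supply that argument either, and as written it leaves both halves of the closed-case commutation unproved.)
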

\begin{proof}
    We will use the similar method as the open subvariety case.
    By the above Proposition \ref{prop:acyclic}, we can use $i^!\F^\bullet$ instead of $Ri^!\F^\bullet$ by the condition.
    We shall prove that the following isomorphism holds:
    \begin{equation}\label{eqn:closed_purpose}
        (i_* i^!\F^\bullet)_{an}\simeq(i_{an})_*(i_{an})^!\F_{an}^\bullet
    \end{equation}
    It suffices to show the existence of the isomorphism \eqref{eqn:closed_purpose} for the single sheaf $\F$.
    
    By the definition of analytification, \eqref{eqn:closed_purpose} is 
    \begin{equation}
        h^*(i_* i^!\F)\otimes_{\oan}\han \simeq (i_{an})_*(i_{an})^!(h^*\F\otimes_{\oan}\han)
    \end{equation}
    where $h$ is in Definition \ref{def:analytification}.
    Using Lemma \ref{lem:commute}, we get
    \begin{align}\label{eqn:commute_closed}
        \begin{split}
            h^* i_* i^! \F
            &= h^* i_! i^* \rsp_i \F \\
            &\simeq (i_{an})_! (i_{an})^* h^* \rsp_i \F.
        \end{split}
    \end{align}
    Suppose $U$ is an open subset of $X$ and $U_{an}\subset X_{an}$ is its analytic space. Then we have
    \begin{align}
        \begin{split}
            (h^* \rsp_i \F)(U_{an})
            &=(\rsp_i\F)(U)\\
            &=\left\{s\in \F(U)\mid \supp(s)\subset Y\right\}\\
            &=\left\{s'\in h^*\F(U_{an})\mid \supp(s')\subset Y_{an}\right\}\\
            &=(\rsp_{i_{an}}h^*\F)(U_{an}).
        \end{split}
    \end{align}
    It follows that $h^* \rsp_i = \rsp_{i_{an}} h^*$. Thus the last term of \eqref{eqn:commute_closed} is
    $$(i_{an})_! (i_{an})^* \rsp_{i_{an}} h^* \F = (i_{an})_* (i_{an})^! h^* \F,$$
    then we get
    \begin{equation}
        h^* i_* i^! \F \simeq (i_{an})_* (i_{an})^! h^* \F.
    \end{equation}

    Now we shall show that 
    \begin{equation}\label{eqn:wts_closed}
        (i_{an})_* (i_{an})^! \F'\otimes_{\oan}\han \simeq (i_{an})_*(i_{an})^!(\F'\otimes_{\oan}\han)
    \end{equation}
    where we denote $\F'=h^*\F\in\algs(X_{an})$.
    To do this, we give two lemmas.

    \begin{lem}\label{lem:tensor_expression}
        Let $(X, \oalg)$ be a locally ringed space, $Z$ be its subspace of $X$, and $i:Z\hookrightarrow X$ be natural inclusion.
        If $\G$ is a locally free $\oalg$-sheaf on $X$, then
        $$i_*i^*\G\simeq \G\otimes_{\oalg} i_*\mathscr{O}_{Z}$$
    \end{lem}
    \begin{subproof}
        By the projection formula (see \cite[Exercise II.5.1 (d)]{hart}),
        $$i_*(i^*\G\otimes_{\mathscr{O}_{Z}}\mathscr{O}_{Z})\simeq \G\otimes_{\oalg}i_*\mathscr{O}_{Z}.$$
        The RHS is $i_*(i^*\G\otimes_{\mathscr{O}_{Z}}\mathscr{O}_{Z})\simeq i_* i^*\G$, and this proves our lemma.
    \end{subproof}

    The following lemma forms the core of this proof.
    \begin{lem}
        Let $Y$ is closed subvariety of $X$ and $i:Z\hookrightarrow X$ be the natural inclusion.
        Suppose $\G$ is a locally free flasque algebraic sheaf on $X_{an}$. Then
        $$(i_{an})_* (i_{an})^!(\G\otimes_{\oan} \han) \simeq (i_{an})_* (i_{an})^!\G\otimes_{\oan}\han$$
    \end{lem}
    \begin{subproof}
        Let $j:X-Y\hookrightarrow X$ denote the natural inclusion of an open subvariety.
        Since $Y_{an}$ is closed and $\G$ is flasque, the natural mapping $\Gamma(U_{an}, \G)\rightarrow\Gamma(U_{an}-Y_{an}, \G)$ is surjective.
        By \cite[5.6]{maxim}, there exist a short exact sequence
        \begin{equation}\label{eqn:short_exact}
            0\longrightarrow (i_{an})_* (i_{an})^!\G \longrightarrow \G \longrightarrow (j_{an})_* (j_{an})^*\G \longrightarrow 0
        \end{equation}
        We can make two short exact sequences from \eqref{eqn:short_exact}.
        First, we substitute $\G\otimes_{\oan} \han$ for $\G$ in the sequence.
        Second, since $\han$ is $\oan$-flat sheaf by \cite[Corollaire 1]{gaga} and \cite[Proposition 22]{gaga}, 
        we can apply $-\otimes_{\oan}\han$ for each term of the sequence.
        The diagram below shows that these two short exact sequences coincide at the center.
        \begin{center}
            \begin{tikzcd}[column sep=2.6ex]
                0 \arrow[r] & (i_{an})_* (i_{an})^!(\G\otimes_{\oan} \han) \arrow[r] \arrow[d, no head, dashed] & \G\otimes_{\oan} \han \arrow[r]\arrow[d, no head, double] & (j_{an})_* (j_{an})^* (\G\otimes_{\oan} \han) \arrow[d, no head, dashed] \arrow[r]  & 0 \\
                0 \arrow[r] & ((i_{an})_* (i_{an})^!\G)\otimes_{\oan}\han  \arrow[r]                            & \G\otimes_{\oan} \han \arrow[r]                           & ((j_{an})_*(j_{an})^*\G)\otimes_{\oan}\han  \arrow[r] & 0
            \end{tikzcd}
        \end{center}
        \bigskip
        Thus, if we prove that 
        \begin{equation}\label{eqn:sub_isomorphic}
            (j_{an})_* (j_{an})^* (\F\otimes_{\oan} \han) \simeq \left((j_{an})_*(j_{an})^*\F\right)\otimes_{\oan}\han,
        \end{equation}
        we can get 
        \begin{equation}
            (i_{an})_* (i_{an})^!(\F\otimes_{\oan} \han) \simeq \left((i_{an})_* (i_{an})^!\F\right)\otimes_{\oan}\han.
        \end{equation}
        From Lemma \ref{lem:tensor_expression}, we have the following expressions using tensor product:
        $$(j_{an})_* (j_{an})^* (\F\otimes_{\oan} \han)\simeq (\F\otimes_{\oan} \han)\otimes_{\oan}(j_{an})_*\mathscr{O}_{X_{an}-Y_{an}}$$
        and 
        $$\left((j_{an})_*(j_{an})^*\F\right)\otimes_{\oan}\han \simeq (\F\otimes_{\oan}(j_{an})_*\mathscr{O}_{X_{an}-Y_{an}})\otimes_{\oan}\han.$$
        Like the case of open subvarity, the associativity of tensor product prove the isomorphism.
    \end{subproof}
    By the above lemma and the condition that $h^*\F$ is locally free and flasque, we get the isomorphism in \eqref{eqn:wts_closed}, and it finishes the proof.
\end{proof}


\newpage
\bibliographystyle{alpha}
\bibliography{ref}

\large{
    \bigskip   
    \bigskip
    \bigskip

    \textsl{Taewan Kim}\\
    \textit{E-mail:}
    \href{mailto:taewankim0109@gmail.com}{\nolinkurl{taewankim0109@gmail.com}}
    
    \bigskip
    \bigskip

    \textsl{Eita Haibara}\\
    \textit{E-mail:}
    \href{mailto:eita_haibara@protonmail.com}{\nolinkurl{eita_haibara@protonmail.com}}

    \bigskip
    \bigskip

    \textit{Visiting our page!}\\
    \large{\url{https://sites.google.com/view/pocariteikoku/home}}
}

\end{document}